\newtheorem{theorem}{Theorem}[section]
\newtheorem{proposition}[theorem]{Proposition}
\newtheorem{lemma}{Lemma}[section]
\theoremstyle{definition}
\newtheorem{definition}{Definition}[section]
\DeclareSymbolFont{AMSb}{U}{msb}{m}{n}
\DeclareMathSymbol{\N}{\mathbin}{AMSb}{"4E}
\DeclareMathSymbol{\Z}{\mathbin}{AMSb}{"5A}
\DeclareMathSymbol{\R}{\mathbin}{AMSb}{"52}
\DeclareMathSymbol{\Q}{\mathbin}{AMSb}{"51}
\DeclareMathSymbol{\I}{\mathbin}{AMSb}{"49}
\DeclareMathSymbol{\C}{\mathbin}{AMSb}{"43}
\theoremstyle{definition}
\begin{document}
\title{An analysis of words coming from Chac{\'o}n's transformation}

\author[Bell, Brumley, Hill, McGlothlin, Nicholas, Ogunfunmi]{Patrick Bell, Hunter Brumley, Aaron Hill, Nathanael McGlothlin, Maireigh Nicholas,Tofunmi Ogunfunmi}







\date{\today}
\subjclass[2010]{Primary 37A05}
\keywords{Chacon's transformation, trivial centralizer, non-rigid}
\thanks{The third author acknowledges the US NSF grant DMS-0943870 for the support of his research.}

\begin{abstract}
We analyze finite and infinite words coming from the symbolic version of Chac{\'o}n's transformation, focusing on distances between such words.  Our main result is that if $W = 0010 0010 1 0010 \ldots$ is the infinite word usually associated with Chac{\'o}n's transformation, then the Hamming distance between $W$ and any positive shift of $W$ is strictly greater than $\frac{2}{9}$; moreover, this bound is sharp.  This yields an alternate proof that Chac{\'o}n's transformation is non-rigid and (using King's weak closure theorem) has trivial centralizer.
\end{abstract}

\maketitle \thispagestyle{empty}

\section{Introduction}

A {\em standard Lebesgue space} is a measure space that is isomorphic to the unit interval with Lebesgue measure.  A {\em measure-preserving transformation} is an automorphism of such a space.  Ergodic theory, broadly speaking, is the study of such transformations.

In this paper we study words on the alphabet $\{0,1\}$ that come from a measure-preserving transformation introduced by Chac{\'o}n in the late 1960s.  Chac{\'o}n's transformation has many interesting properties, e.g., 
\begin{enumerate}
\item  it is weakly mixing but not strongly mixing \cite{Friedman};
\item  it has trivial centralizer and is non-rigid \cite{delJunco};
\item  it is not isomorphic to its inverse \cite{Fieldsteel};
\item  has minimal self-joinings of all orders \cite{delJuncoRaheSwanson};
\end{enumerate}
The properties most relevant to this paper are that Chac{\'o}n's transformation has trivial centralizer and is non-rigid.

Let Aut$(X, \mu)$ denote the collection of all measure-preserving transformations of a fixed standard Lebesgue space $(X, \mu)$, where transformations are identified if they agree on a set of full measure.  Aut$(X, \mu)$ is a group under composition.  The centralizer of any $T \in \textnormal{Aut}(X,\mu)$ is the collection of all $S \in \textnormal{Aut}(X, \mu)$ that commute with $T$.  The centralizer of $T$ must contain $T^i$ for every $i \in \Z$. If the centralizer of $T$ contains only the integral powers of $T$, then we say $T$ has {\em trivial centralizer}.  This implies several other interesting properties, e.g., that there is no $R \in \textnormal{Aut}(X, \mu)$ such that $R \circ R = T$ (in other words, that $T$ does not have a composition square root).  As will be seen in the next paragraph, a transformation with trivial centralizer must also be non-rigid.

When equipped with the weak topology, Aut$(X, \mu)$ is a topological group; that is, the unary operation sending $T$ to $T^{-1}$ is continuous, as is the binary operation of composition.  We say that $T \in  \textnormal{Aut}(X,\mu)$ is rigid if there is an increasing sequence $\{i_n\}$ of integers so that $T^{i_n}$ converges to the identity transformation.  In this case, every element of $\{T^i : i \in \Z\}$ is a limit point of the set $\{T^i : i \in \Z\}$, which implies that $\overline{\{T^i : i \in \Z\}}$ is a perfect set, and thus is uncountable.  It is straightforward to check that, since composition is continuous, every element of $\overline{\{T^i : i \in \Z\}}$ must commute with $T$.  
Thus, if a transformation is rigid, its centralizer is uncountable. 

In 1978, del Junco gave a short, elementary proof that Chac{\'o}n's transformation has trivial centralizer.  At the end of his paper he remarked that one could also prove this result by showing that Chac{\'o}n's transformation is non-rigid.  It was known at the time that for rank-1 transformations with two return times, the closure of the integral powers of the transformation equals the centralizer of that transformation.  It was later shown by Jonathan King \cite{King1} that this is true for all rank-1 transformations.   

In this paper we prove directly that Chac{\'o}n's transformation is non-rigid, thus filling in the details of the alternate proof that del Junco mentioned, and also give optimal bounds on the distances between certain words coming from the symbolic definition of Chac{\'o}n's transformation. 

Chac{\'o}n's transformation can be defined as follows.  Let $\{W_n\}$ be the sequence of finite words defined by $W_0 = 0$ and $W_{n+1} = W_n W_n 1 W_n$.  Let $W$ be the unique infinite word such that each $W_n$ occurs as an initial segment of $W$. Let $$X = \{x \in \{0,1\}^\Z: \textnormal{ every finite subword of $x$ is a subword of W}\}.$$  Let $\sigma$ denote the shift, i.e., the bijection from $X$ to $X$ defined by $\sigma (x) (i) = x(i+1)$.  It is easy to check that $(X, \sigma)$ is a uniquely ergodic system, i.e., there is a unique atomless probability measure $\mu$ on $X$ that is invariant under the shift.  The measure $\mu$ can be explicitly described on cylinder sets as $$\mu (\mathcal{O}_{\alpha, i}) = \lim_{n \rightarrow \infty} fr(\alpha, W_n),$$ where $\mathcal{O}_{\alpha,i}$ is the set of all $x \in X$ that have an occurrence of $\alpha$ beginning at position $i$, and $fr(\alpha, W_n)$ is the frequency of occurrences of $\alpha$ in $W_n$, i.e., the number of occurrences of $\alpha$ in $W_n$ divided by the length of $W_n$.  The shift $\sigma$ is then a measure-preserving transformation on the standard Lebesgue space $(X, \mu)$; the system $(X, \mu, \sigma)$ is referred to as Chac{\'o}n's transformation.

Our main result, Proposition \ref{prop1} below, shows that the distance between the infinite word $W$ and any positive shift of $W$ is strictly greater than $\frac{2}{9}$ (Proposition \ref{prop2} shows this bound is sharp).  This immediately implies that the Hamming distance between any $x \in X$ and any positive shift of $x$ is also at least $\frac{2}{9}$, which is a substantial improvement over the bound of $10^{-100}$ that del Junco stated without proof in \cite{delJunco}.  The fact that there is a positive lower bound on the Hamming distance between any $x \in X$ and any positive shift of $x$ implies that Chac{\'o}n's transformation is non-rigid.




\section{Arguments}
Recall that $W_0 =0$ and $W_{n+1} = W_nW_n1W_n$; also that $W$ is the unique infinite word such that each $W_n$ occurs as an initial segment of $W$.  We collect here some basic facts that can be easily proved by induction.
\begin{itemize}
\item  Each $W_n$ begins with 0 (001 if $n\geq 1$) and ends with 0 (10 if $n \geq 1$).
\item  $W_n$ has exactly $3^n$ occurrences of 0 and $\frac{3^n - 1}{2}$ occurrences of 1.
\item  There are no occurrences of 11 or 0000 in any $W_n$, or in $W$.
\end{itemize}

There are a few more facts that we will need and that require some explanation.  Notice that each $W_{n+1}$ is built from three copies of $W_n$, with a single 1 inserted between the second and third copy.  It follows that $W_{n+k}$ can be built from $3^k$ copies of $W_n$, with a single 1 inserted between some of those copies.   

\begin{lemma}
\label{lemma0}
There are exactly $3^k$ occurrences of $W_n$ in $W_{n+k}$.  In other words, there are no occurrences of $W_n$ in $W_{n+k}$ except the expected ones.
\end{lemma}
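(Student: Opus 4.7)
The plan is to induct on $n$. The base case $n = 0$ is immediate: since $W_0 = 0$, the lemma reduces to the basic fact already noted in the paper that $W_k$ contains exactly $3^k$ zeros.

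For the inductive step, assume the lemma for $n$ and fix $k \geq 0$. Applying the inductive hypothesis with parameter $k+1$ gives exactly $3^{k+1}$ occurrences of $W_n$ in $W_{n+k+1}$. Since the canonical construction decomposes $W_{n+k+1}$ as $3^{k+1}$ copies of $W_n$ interleaved with certain inserted $1$s, and thereby already exhibits $3^{k+1}$ block-aligned occurrences, the hypothesis forces \emph{every} occurrence of $W_n$ in $W_{n+k+1}$ to be block-aligned. Because $W_{n+1} = W_n W_n 1 W_n$, an occurrence of $W_{n+1}$ in $W_{(n+1)+k} = W_{n+k+1}$ therefore corresponds to three consecutive block-aligned copies of $W_n$ whose two intervening ``separators'' (each either empty or a single $1$) are $\epsilon$ and $1$, respectively.

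Encoding these separators as a word $S_{n+k+1} \in \{\epsilon, 1\}^{3^{k+1}-1}$, it remains to count occurrences of the factor $(\epsilon, 1)$ in $S_{n+k+1}$. Iterating $W_{n+k+1} = W_{n+k} W_{n+k} 1 W_{n+k}$ yields the recurrence $S_{n+k+1} = S_{n+k} \cdot \epsilon \cdot S_{n+k} \cdot 1 \cdot S_{n+k}$ with base value $S_{n+1} = (\epsilon, 1)$. A trivial induction shows that $S_m$ begins with $\epsilon$ and ends with $1$ for every $m \geq n+1$, so the four new adjacent pairs introduced at each stage of the recurrence are $(1,\epsilon)$, $(\epsilon,\epsilon)$, $(1,1)$, and $(1,\epsilon)$, none of which equals $(\epsilon, 1)$. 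Hence the count $a_m$ of $(\epsilon, 1)$ factors in $S_m$ satisfies $a_{m+1} = 3\, a_m$ with $a_{n+1} = 1$, yielding $a_{n+k+1} = 3^k$, as required.

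I do not foresee a genuine obstacle; the only substantive verifications are the first/last-character induction on $S_m$ and the enumeration of the four boundary pairs, both of which are immediate from the recurrence. The key conceptual point making this clean is that inducting on $n$ (rather than $k$) lets us use the stronger fact ``all occurrences of $W_n$ are block-aligned'' as a hypothesis and reduces the counting problem to a tidy combinatorial one on the separator word.
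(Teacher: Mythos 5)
Your argument is correct, and it shares the paper's skeleton (induction on $n$, with the base case reduced to counting $0$s, and the inductive hypothesis invoked to force every occurrence of $W_n$ in $W_{n+k+1}$ to be block-aligned), but the way you finish the inductive step is genuinely different. The paper zooms in on the \emph{middle} copy of $W_n$ inside a candidate occurrence of $W_{n+1}$: since that copy is expected, it sits inside a unique expected $W_{n+1}$, and the two local conditions ``followed by $1$'' and ``not preceded by $1$'' rule out its being the first or last copy there, so the candidate occurrence coincides with an expected one --- three sentences, no counting. You instead translate the problem into the separator word $S_m \in \{\epsilon,1\}^{*}$, observe that occurrences of $W_{n+1}$ biject with adjacent pairs $(\epsilon,1)$, and count those pairs via the recurrence $S_{m+1}=S_m\,\epsilon\,S_m\,1\,S_m$ together with the first-letter/last-letter induction. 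This costs a bit more machinery but buys more: the bijection pins down exactly where the occurrences sit, and the separator-word recurrence immediately yields the follow-up facts the paper lists after the lemma (each non-extreme occurrence of $W_n$ is flanked by $W_n$ or $1W_n$/$W_n1$, and $W_nW_nW_nW_n$ never occurs). One small gloss you should patch: the claim that the three sub-copies of $W_n$ inside an occurrence of $W_{n+1}$ occupy \emph{consecutive} blocks with separators exactly $(\epsilon,1)$ needs a line --- e.g., the second sub-copy starts exactly $|W_n|$ past the first, so it must be the very next block with empty separator, and the symbol immediately after it is $1$ while every block begins with $0$, forcing the next separator to be the inserted $1$ and the third sub-copy to be the following block.
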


\begin{proof}
The claim is obviously true for $n=0$.  Suppose the claim is true for $n$ and consider any occurrence of $W_{n+1} = W_n W_n 1 W_n$ in $W_{n+k}$.  The middle $W_n$ must be expected and thus it must be part of some expected occurrence of $W_{n+1}$.  It can't be the first $W_n$ in an expected occurrence of $W_{n+1}$ because it is immediately followed by 1 and it can't be the last $W_n$ in an expected occurrence of $W_{n+1}$ because it is not immediately preceded by $1$.  Thus it must be the middle $W_n$ of an expected occurrence of $W_{n+1}$ and that means the original occurrence of $W_{n+1}$ under consideration is expected.
\end{proof}

Here are the other facts we will need.  They are easily proved using the lemma above and induction on $k$.
\begin{itemize}
\item Every occurrence of $W_n$ in $W_{n+k}$ (or $W$), except the first one, is immediately preceded by either $W_n$ or $W_n1$.  
\item Every occurrence of $W_n$ in $W_{n+k}$ (or $W$), except the last one, is immediately followed by either $W_n$ or $1W_n$.  
\item  There are no occurrences of $W_nW_nW_nW_n$ in $W_{n+k}$ (or $W$).
\end{itemize}  

\begin{definition}
Let $\alpha$ and $\beta$ be finite words on the alphabet $\{0,1\}$ with $|\alpha| = |\beta|$.  We define the usual Hamming distance $d$ and (as long as $\alpha$ contains at least one 0) a modified 0-Hamming distance $d_0$ between $\alpha$ and $\beta$ as follows.

$\displaystyle d(\alpha, \beta) = \frac{|\{i : \alpha(i) \neq \beta(i)\}| }{|\alpha|}$

$\displaystyle d_0(\alpha, \beta) = \frac{|\{ i : \alpha(i)=0 \textnormal{ and } \beta(i)=1\}| }{|\{i : \alpha(i)=0 \}| }$
\end{definition}

Of particular interest to us is the case when $\alpha$ and $\beta$ have the same number of 0s.  In this case, $$|\{ i : \alpha(i)=0 \textnormal{ and } \beta(i)=1\}| = |\{ i : \alpha(i)=1 \textnormal{ and } \beta(i)=0\}|,$$ which implies both of the following.
\begin{enumerate}
\item  $d_0(\alpha, \beta) = d_0 (\beta, \alpha)$
\item  $\displaystyle d(\alpha, \beta) = 2 \cdot d_0 (\alpha, \beta) \frac{|\{i : \alpha(i)=0 \}| }{|\alpha|}$
\end{enumerate}


\begin{lemma}
\label{lemma1}
For all $n$, $\displaystyle d_0(W_n1, 1W_n) = \frac{1}{2} + \frac{1}{2 \cdot 3^n} = d_0(1W_n, W_n1) .$
\end{lemma}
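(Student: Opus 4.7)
The plan is to induct on $n$. Since $W_n 1$ and $1 W_n$ each contain exactly $3^n$ zeros, the symmetry $d_0(W_n 1, 1 W_n) = d_0(1 W_n, W_n 1)$ is immediate from the first of the two numbered consequences stated just before the lemma, so it suffices to compute $d_0(W_n 1, 1 W_n)$. Writing $c_n$ for the number of indices $i$ at which $(W_n 1)(i) = 0$ and $(1 W_n)(i) = 1$, the denominator in $d_0$ is $3^n$, so the claim reduces to showing $c_n = (3^n + 1)/2$. The base case $n = 0$ is immediate: $W_0 1 = 01$, $1 W_0 = 10$, and $c_0 = 1$.

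For the inductive step I would expand $W_{n+1} 1 = W_n W_n 1 W_n 1$ and $1 W_{n+1} = 1 W_n W_n 1 W_n$, both of length $3L + 2$ where $L = |W_n|$, and align them position by position. The alignment decomposes into three identical ``shifted'' blocks of length $L - 1$ (positions $2$ through $L$, $L+2$ through $2L$, and $2L+3$ through $3L+1$), in each of which we compare $W_n(j+1)$ with $W_n(j)$ for $j = 1, \ldots, L-1$, together with five singleton ``boundary'' positions $1,\ L+1,\ 2L+1,\ 2L+2,\ 3L+2$ where a framing $1$ meets the first or last letter of some $W_n$.

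In any shifted block, a $(0,1)$ disagreement occurs at index $j$ precisely when $W_n(j) = 1$ and $W_n(j+1) = 0$, i.e., at each occurrence of the substring $10$ in $W_n$. Because $W_n$ contains no $11$ and ends in $0$ for $n \ge 1$ (both in the preliminary facts), every $1$ in $W_n$ is followed by a $0$, so the number of $10$'s in $W_n$ equals the number of $1$'s in $W_n$, namely $(3^n - 1)/2$. A short case check of the five boundary positions, using that $W_n$ begins and ends with $0$, shows that exactly two of them (positions $1$ and $2L+2$) are $(0,1)$ disagreements. Summing, $c_{n+1} = 3 \cdot (3^n - 1)/2 + 2 = (3^{n+1} + 1)/2$, and dividing by $3^{n+1}$ completes the induction. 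The one thing to watch is the boundary bookkeeping; beyond that the argument is a straightforward position count.
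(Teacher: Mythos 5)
Your proof is correct: the alignment of $W_{n+1}1 = W_nW_n1W_n1$ against $1W_{n+1} = 1W_nW_n1W_n$ into three shifted blocks plus the five boundary positions is exhaustive, the $(0,1)$ disagreements inside each block are exactly the occurrences of $10$ in $W_n$ (there are $\frac{3^n-1}{2}$ of these, since $W_n$ has no $11$ and ends in $0$), the boundary check yields exactly two more, and $c_{n+1} = 3\cdot\frac{3^n-1}{2} + 2 = \frac{3^{n+1}+1}{2}$ gives the stated value; your symmetry step via equal numbers of zeros is the same as the paper's. The route, however, is genuinely different from the paper's, which uses neither induction nor the recursion $W_{n+1}=W_nW_n1W_n$: it simply observes that since $W_n$ begins with $0$ and contains no $11$, every position $i$ with $1W_n(i)=1$ automatically has $W_n1(i)=0$, so the set of $(0,1)$ disagreements is precisely the set of positions of $1$s in $1W_n$, of size $1+\frac{3^n-1}{2}$, and dividing by $3^n$ finishes in one line. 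Your block count is at heart the same observation (a $1$ is always followed by a $0$) applied one level down, at the price of the boundary bookkeeping; note also that your induction is only nominal, since the hypothesis $c_n=\frac{3^n+1}{2}$ is never invoked in the step, which is a direct computation valid for every $n\ge 1$ with $n=0$ checked by hand. The paper's version buys brevity and uniformity in $n$ with no case analysis; your decomposition buys nothing extra for this lemma, though the same copies-of-$W_n$ decomposition is exactly the technique the paper does need later, in Lemma \ref{lemma2} and Propositions \ref{prop1} and \ref{prop2}.
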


\begin{proof}
We know that $W_n$ begins with 0 and does not contain any occurrences of 11.  This implies that if $i$ is such that $1W_n(i)=1$, then it must be the case that $W_n1(i)=0$. Thus, $$\{ i : W_n1(i)=0 \textnormal{ and } 1W_n(i)=1\} = \{i : 1W_n(i)=1\}.$$ 
Now we have 
$$d_0(W_n1, 1W_n) = \frac{|\{ i : 1W_n(i)=1\}| }{|\{ i : W_n(i)=0 \}| } = \frac{1 + \frac{3^n -1}{2}}{3^n} = \frac{1}{2} + \frac{1}{2 \cdot 3^n }.$$
Then, since $1W_n$ and $W_n1$ have the same number of 0s, $ d_0(W_n1, 1W_n) = d_0(1W_n, W_n1) .$
\end{proof}

\begin{lemma}
\label{lemma2}
If $\beta$ is a subword of $W$ of length $|W_n|$, but $\beta \neq W_n$, then $d_0(W_n, \beta)> \frac{1}{6}$.
\end{lemma}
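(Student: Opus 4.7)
The plan is to proceed by induction on $n$. The base case $n=0$ is immediate: the only length-$1$ subword of $W$ other than $W_0=0$ is the character $1$, giving $d_0 = 1 > 1/6$. For the inductive step, use the decomposition $W_{n+1}=W_n W_n 1 W_n$, write $\beta=\beta_1\beta_2 c\beta_3$ with $|\beta_i|=|W_n|$ and $c$ a single character, and set $m_i=|\{j:W_n(j)=0,\ \beta_i(j)=1\}|$. Since the middle character of $W_{n+1}$ is $1$, the total $0$-to-$1$ mismatch count equals $m_1+m_2+m_3$, and the goal is to show this exceeds $3^{n+1}/6$.

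Two easy observations handle the extremes. If no $\beta_i$ equals $W_n$, the inductive hypothesis yields $m_i > 3^n/6$ for every $i$, so $m_1+m_2+m_3 > 3^{n+1}/6$. If all three equal $W_n$, then $\beta=W_n W_n c W_n$; since $W_n W_n 0 W_n$ contains $0000$ (using that $W_n$ begins with $001$ and ends with $10$ for $n\geq 1$), which $W$ avoids, one must have $c=1$, forcing $\beta=W_{n+1}$, contradicting the hypothesis.

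The subtle scenarios---exactly one or two $\beta_i$'s equal $W_n$---constitute the main work. Each equation $\beta_i=W_n$ forces the corresponding block of $\beta$ to coincide with an \emph{expected} occurrence of $W_n$ in $W$, by Lemma~\ref{lemma0}. Combining this with the facts that consecutive expected occurrences of $W_n$ are separated by a gap of $0$ or $1$, that no four consecutive $W_n$ blocks occur, and that $\beta\neq W_{n+1}$ pins $\beta$ down to one of a short list of explicit words. In the ``two blocks equal $W_n$'' sub-cases, $\beta$ is one of $W_n W_n W_n 1$, $W_n 1 W_n W_n$, or $W_n[1{:}|W_n|]\,1\,W_n\,1\,W_n$; in the ``one block equals $W_n$'' sub-cases, $\beta$ is $W_n\,1\,W_n\,1\,W_n[0{:}|W_n|{-}1]$, $1\,W_n W_n W_n$, or one of two similar variants with $\beta_2=W_n$. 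For each such explicit $\beta$, a direct position-by-position count---using that every $1$ in $W_n$ is flanked by $0$'s (since $W$ contains no $11$) and that $W_n$ has exactly $(3^n-1)/2$ ones---gives $m_1+m_2+m_3=(3^n+1)/2$ in the three tightest sub-cases, hence $d_0(W_{n+1},\beta)=\tfrac{1}{6}+\tfrac{1}{2\cdot 3^{n+1}}>\tfrac{1}{6}$, and $\geq 3^n$ in the remaining sub-cases, comfortably exceeding the threshold.

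The principal obstacle is precisely the subtle scenario: whenever a block satisfies $\beta_i = W_n$, it contributes $m_i=0$, so the inductive bound $m_i>3^n/6$ summed only over the (one or two) mismatched blocks falls strictly short of $3^{n+1}/6$. The structural rigidity supplied by Lemma~\ref{lemma0} and the gap facts is exactly what is needed to pin $\beta$ down explicitly and replace the inductive estimate by a direct count.
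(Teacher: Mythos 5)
Your overall strategy is the paper's: induct on $n$, split $\beta$ into three blocks and the middle character, note that the inserted $1$ never contributes to the $d_0$ numerator, dispose of the case where no block equals $W_n$ by induction, and handle the remaining cases via the structural facts (all occurrences of $W_n$ are the expected ones, separators are empty or a single $1$, no four consecutive blocks). Your tight count $(3^n+1)/2$ for an off-by-one block is exactly the content of Lemma~\ref{lemma1}, which the paper invokes instead of recounting; the paper also organizes the cases more coarsely (by whether $\beta_1$ and $\beta_2$ equal $W_n$, plus one case for $\beta_3$), so it only ever needs to identify a single shifted block and never has to determine $\beta$ completely. Your explicit counts ($(3^n+1)/2$ in the tight sub-cases, $\geq 3^n$ otherwise) do check out for $n\geq 1$, and they give slightly sharper information than the paper's inequalities.

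There is, however, one concrete slip in the enumeration: your claim that the sub-case $\beta_2=\beta_3=W_n$, $\beta_1\neq W_n$ forces $\beta$ to be $W_n[1{:}]\,1\,W_n\,1\,W_n$ fails at $n=0$, because occurrences of $W_0=0$ can sit at offset $1$ from one another (your derivation tacitly assumes the block following $\beta_2$ with no separator cannot itself be aligned one position off from $\beta_3$, which is false only when $|W_n|=1$). For example, $\beta=1000$ is a subword of $W$ (it occurs in $W_2=0010\,0010\,1\,0010$) with $\beta_2=\beta_3=W_0$ and $\beta_1\neq W_0$, yet it is not on your list. The lemma is not endangered --- for this $\beta$ the mismatch count against $W_1=0010$ is $1>3/6$ --- but as written the ``pinning down'' step is not justified in the first inductive step. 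The easiest patches are either to verify the step from $n=0$ to $n=1$ by inspecting all length-$4$ subwords of $W$ directly and run your enumeration only for $n\geq 1$ (where $|W_n|\geq 4$, so by Lemma~\ref{lemma0} two occurrences of $W_n$ cannot overlap at offset $1$), or to adopt the paper's coarser case split, which bounds the contribution of a single shifted block via Lemma~\ref{lemma1} without ever determining $\beta_3$, and so avoids the issue entirely.
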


\begin{proof}
The claim is obviously true for $n=0$.  Suppose it is true for $n$ and let $\beta$ be a subword of $W$ of length $|W_{n+1}|$, but not equal to $W_{n+1}$.  Let $\beta = \beta_1 \beta_2 \gamma \beta_3$, where $|\beta_i|=|W_n|$.  Notice that $$d_0(W_{n+1}, \beta) = \frac{1}{3} \bigg[d_0 (W_n, \beta_1) + d_0(W_n, \beta_2) + d_0(W_n, \beta_3))\bigg].$$  If none of the $\beta_i$ are equal to $W_n$, then by induction we have $$d_0(W_{n+1}, \beta) = \frac{1}{3} \bigg[d_0 (W_n, \beta_1) + d_0(W_n, \beta_2) + d_0(W_n, \beta_3))\bigg] > \frac{1}{3} \bigg[\frac{1}{6} + \frac{1}{6} + \frac{1}{6}\bigg] = \frac{1}{6}.$$  We may assume, then, that at least one of the $\beta_i$ is equal to $W_n$.  We now consider the various possibilities.

Suppose $\beta_1 = W_n$ and $\beta_2 = W_n$.  Since every occurrence of $W_n$ in $W$ is either followed by $1W_n$ or $W_n$, $\beta_2$ must be followed by either $1W_n$ or $W_n$.  Since $\beta \neq W_{n+1}$, it must be $W_n$.  That must be followed by 1, since $W$ does not contain any occurrences of $W_nW_nW_nW_n$.  Thus $\beta = W_n W_n W_n 1$.  Now, by Lemma \ref{lemma1}, $$d_0(W_{n+1}, \beta) \geq   \frac{1}{3} \bigg[ d_0(W_n, \beta_3)\bigg] =  \frac{1}{3} \bigg[ d_0(1W_n, W_n 1)\bigg] > \frac{1}{3} \bigg[ \frac{1}{2}  \bigg]= \frac{1}{6}.$$  

Suppose $\beta_1 = W_n$, but $\beta_2 \neq W_n$.  Since every occurrence of $W_n$ in $W$ is either followed by $1W_n$ or $W_n$, it must the case that $\beta_1$ is followed by $1W_n$ in $\beta$, and thus that $\beta_2 \gamma = 1W_n$.  Now, by Lemma \ref{lemma1}, $$d_0(W_{n+1}, \beta) \geq  \frac{1}{3} \bigg[ d_0(W_n, \beta_2 )\bigg] = \frac{1}{3} \bigg[ d_0(W_n 1, 1W_n) \bigg] > \frac{1}{3} \bigg[ \frac{1}{2}  \bigg]= \frac{1}{6}.$$  

Suppose $\beta_1 \neq W_n$, but $\beta_2 = W_n$.  Every occurrence of $W_n$ in $W$, except the first, is immediately preceded by either $W_n$ or $W_n1$.  Since $\beta_1 \neq W_n$, it must be the case that $\beta_2$ must be immediately preceded by $W_n1$.  Now, by Lemma \ref{lemma1}, $$d_0(W_{n+1}, \beta) \geq  \frac{1}{3} \bigg[ d_0(W_n, \beta_1)\bigg] = \frac{1}{3} \bigg[d_0(1W_n, W_n 1)\bigg] > \frac{1}{3} \bigg[ \frac{1}{2}  \bigg]= \frac{1}{6}.$$   

Finally, suppose that $\beta_1 \neq W_n$ and $\beta_2 \neq W_n$, but $\beta_3 = W_n$.  Every occurrence of $W_n$ in $W$, except the first, is immediately preceded by either $W_n$ or $W_n1$.  Since $\beta_2 \neq W_n$, it must be the case that $\beta_3$ must be immediately preceded by $W_n$.  Moreover, since $\beta_1$ is also not equal to $W_n$ it must be the case that $\beta_3$ is immediately preceded by $W_n W_n$.  Since $W$ does not contain any occurrences of $W_nW_nW_nW_n$, it must be that $\beta= 1 W_nW_n W_n$.  Now, by Lemma \ref{lemma1}, $$d_0(W_{n+1}, \beta) \geq  \frac{1}{3} \bigg[ d_0(W_n, \beta_1) \bigg]= \frac{1}{3} \bigg[ d_0(W_n1, 1W_n) \bigg] >  \frac{1}{3} \bigg[ \frac{1}{2}  \bigg]= \frac{1}{6}.$$  
\end{proof}

We now extend our definitions of $d$ and $d_0$ to measure the distance between the infinite word $W$ and any of its positive shifts.  



\begin{definition}
Let $i>0$.  We define $$d (W, \sigma^i W) = \lim_{n \rightarrow \infty} d(W_n, \alpha_n),$$ and $$d_0 (W, \sigma^i W) = \lim_{n \rightarrow \infty} d_0(W_n, \alpha_n)$$ where $\alpha_n$ is the subword of $W$ beginning at position $i$ that has length $|W_n|$.
\end{definition}

Note that in the definition above, we are measuring the distance between initial segments of $W$ and $\sigma^i W$ of length $|W_n|$.  The decision to measure distances between initial segments of those lengths (as opposed to any length) is intentional.  There are two reasons for doing so.  First, the arguments become simpler when we are dealing with lengths of size $W_n$ and the results we obtain are enough to show that Chac{\'o}n's transformation is non-rigid.  Second, this definition parallels the definition given in the introduction for the measure $\mu$ on the symbolic space associated to Chac{\'o}n's transformation.  


We next explore the relationship between $d_0(\sigma^i W, W)$ and $d(\sigma^i W, W)$.

\begin{lemma} 
\label{lemma3}
For all $i>0$, $d(W, \sigma^i W) = \frac{4}{3} d_0(W, \sigma^iW)$.
\end{lemma}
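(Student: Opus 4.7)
The strategy is to apply identity (2) from the Definition, modulo a small error arising from the fact that $W_n$ and $\alpha_n$ need not have exactly the same number of $0$'s. Since $W_n$ has $3^n$ zeros and length $|W_n| = \frac{3^{n+1}-1}{2}$, the ratio $\frac{3^n}{|W_n|} = \frac{2\cdot 3^n}{3^{n+1}-1}$ tends to $\frac{2}{3}$, so applying (2) would produce exactly the prefactor $\frac{4}{3}$ in the limit.

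The first and only substantive step is to control the $0$-count discrepancy.  Using the already-established structural fact that $W_{n+k}$ consists of $3^k$ copies of $W_n$ separated by single inserted $1$'s, I would write $W = W_n Y_1 W_n Y_2 W_n \cdots$ with each $Y_j \in \{\varepsilon, 1\}$.  Any length-$|W_n|$ window $\alpha_n$ either coincides with one of these $W_n$ blocks (and then $|\alpha_n|_0 = 3^n$), or it straddles the boundary between two consecutive blocks, in which case $\alpha_n$ is a suffix of one $W_n$, possibly followed by the corresponding $Y_j$, followed by a prefix of the next $W_n$.  A short case check on the two possibilities $Y_j = \varepsilon$ and $Y_j = 1$ (using that $W_n$ ends in $0$) shows $|\alpha_n|_0 \in \{3^n - 1, 3^n\}$ in every case, so $0 \le 3^n - |\alpha_n|_0 \le 1$.

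With this sublemma in hand, set $A_n = |\{j : W_n(j)=0,\, \alpha_n(j)=1\}|$ and $B_n = |\{j : W_n(j)=1,\, \alpha_n(j)=0\}|$.  Comparing $0$-counts column by column gives $A_n - B_n = 3^n - |\alpha_n|_0$, so $|A_n - B_n| \le 1$.  A direct expansion then yields
\[
d(W_n, \alpha_n) \;=\; \frac{A_n+B_n}{|W_n|} \;=\; \frac{2\cdot 3^n}{|W_n|}\, d_0(W_n, \alpha_n) \;+\; \frac{B_n - A_n}{|W_n|}.
\]
Letting $n \to \infty$, the prefactor tends to $\frac{4}{3}$ while the error term is bounded by $1/|W_n| \to 0$, giving $d(W, \sigma^i W) = \frac{4}{3}\, d_0(W, \sigma^i W)$, as claimed.

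The main obstacle is the sublemma bounding $\big|3^n - |\alpha_n|_0\big|$; once it is in hand, the rest is algebra and a limit.  The block-decomposition argument should be the only subtle piece, and it rests squarely on the bulleted structural facts established just before Lemma \ref{lemma0}'s consequences.
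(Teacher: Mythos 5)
Your proof is correct, and its skeleton is the same as the paper's: apply identity (2) relating $d$ and $d_0$ to the prefixes $W_n$ versus $\alpha_n$, note that the prefactor $\frac{2\cdot 3^n}{|W_n|} = \frac{4\cdot 3^n}{3^{n+1}-1}$ tends to $\frac{4}{3}$, and pass to the limit. Where you diverge is the zero-count issue. The paper disposes of it with an exact observation: for $n$ large enough that $i<|W_n|$, the window $\alpha_n$ lies entirely inside the first two copies of $W_n$ in $W$ (recall $W$ begins $W_{n+1}=W_nW_n1W_n$, so there is no inserted $1$ at the first block boundary), hence $\alpha_n$ is precisely the last $|W_n|-i$ entries of $W_n$ followed by the first $i$ entries of $W_n$ --- a cyclic rotation of $W_n$ with exactly $3^n$ zeros --- so identity (2) applies verbatim with no error term. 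You instead prove the weaker statement $\bigl|3^n-|\alpha_n|_0\bigr|\le 1$ by a block-decomposition case analysis allowing an inserted $1$ at the straddled boundary, and then carry the correction term $\frac{B_n-A_n}{|W_n|}$, which vanishes in the limit; your identities $d(W_n,\alpha_n)=\frac{2\cdot 3^n}{|W_n|}d_0(W_n,\alpha_n)+\frac{B_n-A_n}{|W_n|}$ and $A_n-B_n=3^n-|\alpha_n|_0$ both check out, so the argument is sound. The trade-off: the paper's route is shorter because it exploits the specific position of $\alpha_n$ at the start of $\sigma^iW$, while yours costs a little bookkeeping but is more robust --- it would apply to a length-$|W_n|$ window taken anywhere in $W$, where the straddled boundary may well carry a spacer $1$ and exact equality of zero counts can fail.
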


\begin{proof}
If $n$ is large enough that $i < |W_n|$, then it is easy to see that $\alpha_n$ has the same number of 0s as $W_n$.  Indeed, $\alpha_n$ consists precisely of the last $|W_n| - i$ entries of $W_n$ followed immediately by the first $i$ entries of $W_n$.  Thus, 
\begin{align*}
d(W, \sigma^i W) &= \lim_{n \rightarrow \infty} d(W_n, \alpha_n)\\
&= \lim_{n \rightarrow \infty}  2 \cdot d_0(W_{n}, \alpha_{n}) \frac{3^n}{3^n + \frac{3^n -1}{2}} \\
&= \lim_{n \rightarrow \infty} \frac{4 \cdot 3^n}{3^{n+1} - 1} d_0(W_{n}, \alpha_{n})\\
&= \frac{4}{3} \lim_{n \rightarrow \infty}  d_0(W_{n}, \alpha_{n})\\
&=\frac{ 4}{3} d_0(W, \sigma^i W)\\
\end{align*}
\end{proof}


\begin{proposition}
\label{prop1}
For all $i>0$, $d_0(W, \sigma^i W) > \frac{1}{6}$ and $d(W, \sigma^i W) > \frac{2}{9}$.
\end{proposition}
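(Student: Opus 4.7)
First, by Lemma~\ref{lemma3} we have $d(W, \sigma^i W) = \tfrac{4}{3} d_0(W, \sigma^i W)$, so the two inequalities in the statement are equivalent; it suffices to prove $d_0(W, \sigma^i W) > \tfrac{1}{6}$.

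Fix $i > 0$ and let $n_0$ be the smallest integer with $|W_{n_0}| > i$. The plan is first to show $\alpha_n \neq W_n$ for $n \geq n_0$: if $\alpha_n = W_n$ then $W_n$ would occur in $W$ at position $i$, but by Lemma~\ref{lemma0} together with the three bullet points following it, every occurrence of $W_n$ in $W$ is an ``expected'' one, and the smallest positive expected starting position is $|W_n| > i$. Lemma~\ref{lemma2} then gives $a_n := d_0(W_n, \alpha_n) > \tfrac{1}{6}$. Next, I would introduce a companion word $\gamma_n$: the length-$|W_n|$ subword of $W$ beginning at position $i + |W_n|$; concretely $\gamma_n$ consists of the last $|W_n|-i$ symbols of $W_n$, followed by $1$, followed by the first $i-1$ symbols of $W_n$. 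The same expected-positions argument, now applied to position $i + |W_n|$, shows $\gamma_n \neq W_n$, so by Lemma~\ref{lemma2} we also have $b_n := d_0(W_n, \gamma_n) > \tfrac{1}{6}$.

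The heart of the argument is a character-by-character decomposition that exploits $W_{n+1} = W_n W_n 1 W_n$. For $n \geq n_0$, splitting $\alpha_{n+1}$ into its three length-$|W_n|$ pieces around the middle $1$ of $W_{n+1}$ should produce the sequence $\alpha_n,\, \gamma_n,\, \alpha_n$, while the analogous split of $\gamma_{n+1}$ should produce $\alpha_n,\, \gamma_n,\, \gamma_n$. Dividing by $3^{n+1}$ (the number of $0$s in $W_{n+1}$) then yields the linear system
\[
a_{n+1} = \tfrac{2}{3} a_n + \tfrac{1}{3} b_n, \qquad b_{n+1} = \tfrac{1}{3} a_n + \tfrac{2}{3} b_n.
\]
From this, $a_n + b_n$ is invariant and $a_n - b_n$ contracts by a factor of $\tfrac{1}{3}$ at each step, so $\lim_{n \to \infty} a_n = \tfrac{1}{2}(a_{n_0} + b_{n_0}) > \tfrac{1}{2}(\tfrac{1}{6} + \tfrac{1}{6}) = \tfrac{1}{6}$, strictly, which is the desired inequality.

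The main obstacle is verifying the two block decompositions. Because $\alpha_{n+1}$ has length $3|W_n|+1$ and begins at position $i > 0$, it extends past the end of the first copy of $W_{n+1}$ in $W$ and into the initial segment of the next $W_{n+1}$. The verification must track this overflow carefully, using the fact that, whenever $i \leq |W_n|$, the first $i$ characters of $W_{n+1}$ coincide with those of $W_n$; once this bookkeeping is in hand, the linear-algebra step that produces the limit $\tfrac{1}{2}(a_{n_0} + b_{n_0})$ is immediate.
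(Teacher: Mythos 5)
Your argument is correct, and its key structural input is the same as the paper's: for a fixed $i$ and all large $n$, the only words that ever get compared against a copy of $W_n$ are your $\alpha_n$ and $\gamma_n$ (the paper's cases (a) and (b)), both subwords of $W$ distinct from $W_n$, so Lemma \ref{lemma2} applies to each. Where you genuinely diverge is in how the limit is handled. The paper fixes one level $n$ with $i<|W_n|$ and, for every $k$, writes $d_0(W_{n+k},\alpha_{n+k})$ in one shot as a weighted average of the two fixed numbers $d_0(W_n,\beta_1)$ and $d_0(W_n,\beta_2)$, bounding it below by their minimum; no recursion and no convergence argument is needed. You instead track the pair $(a_n,b_n)$ level by level, verify the one-step splittings of $\alpha_{n+1}$ into $\alpha_n,\gamma_n,\alpha_n$ and of $\gamma_{n+1}$ into $\alpha_n,\gamma_n,\gamma_n$ (these are correct: since $W$ begins with $W_{n+2}=W_{n+1}W_{n+1}1W_{n+1}$, the overflow past a copy of $W_{n+1}$ lands on the first $i$ symbols of the next $W_n$, respectively on $1$ followed by its first $i-1$ symbols, and the inserted $1$ of the template $W_{n+1}$ contributes nothing to $d_0$), and then solve the linear system. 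This costs a bit more bookkeeping but buys more: it shows the limit defining $d_0(W,\sigma^iW)$ exists and identifies it exactly as $\tfrac{1}{2}(a_{n_0}+b_{n_0})$, which in particular recovers the sharp values of Proposition \ref{prop2} when $i=2|W_{n-1}|+1$. Two small points to make explicit when writing it up: for $\gamma_n\neq W_n$ you need that no expected occurrence of $W_n$ starts strictly between positions $|W_n|$ and $2|W_n|+1$ (immediate from Lemma \ref{lemma0} and the layout of $W_{n+1}$, but a slightly different statement from the one you quoted for $\alpha_n$); and the block verification you flag as the ``main obstacle'' is exactly the routine computation sketched above, so it should be carried out rather than left as a plan.
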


\begin{proof}
Let $i>0$ and choose $n$ so that $i < |W_n|$.  We will calculate $d_0(W, \sigma^i W)$ by calculating the limit, as $k \rightarrow \infty$, of $d_0(W_{n+k}, \alpha_{n+k})$.  Consider any $k>0$.  We know that there are $3^k$ occurrences of $W_n$ in $W_{n+k}$.  Now $d_0(W_{n+k}, \alpha_{n+k})$ is calculated as the average of $d_0(W_n, \beta_j)$, where $j$ ranges from 1 to $3^k$ and $\beta_j$ is the subword of $\alpha_{n+k}$ that the $j$th occurrence of $W_n$ in $W_{n+k}$ is being compared with.  The key observation is that there are only two different values of $\beta_j$.  The first $|W_n| - i$ entries of $\beta_j$ are always the last $|W_n| - i$ entries of $W_n$ and that is immediately followed by either (a) the first $i$ entries of $W_n$ or (b) a 1 and then the first $i-1$ entries of $W_n$.  Whether $\beta_j$ falls into case (a) or case (b) is determined by whether the $j$th occurrence of $W_n$ in $W$ is followed by $W_n$ or by $1W_n$.  Notice that $\beta_1$ falls into case (a) and $\beta_2$ falls into case (b).  Now $d_0 (W_{n+k}, \alpha_{n+k})$ is simply a weighted average of $d_0 (W_n, \beta_1)$ and $d_0 (W_n, \beta_2)$.  By Lemma \ref{lemma2}, we know $d_0 (W_n, \beta_1)>\frac{1}{6}$ and $d_0 (W_n, \beta_2) > \frac{1}{6}$.  Thus, $$d_0 (W_{n+k}, \alpha_{n+k}) \geq \min\{d_0 (W_n, \beta_1), d_0 (W_n, \beta_2)\}> \frac{1}{6}.$$ 
Now, since $d_0 (W_n, \beta_1)$ and $d_0 (W_n, \beta_2)$ are independent of $k$, we have $$d_0( W, \sigma^i W) = \lim_{k \rightarrow \infty} d_0 (W_{n+k}, \alpha_{n+k}) \geq \min\{d_0 (W_n, \beta_1), d_0 (W_n, \beta_2)\} > \frac{1}{6}.$$  
By Lemma \ref{lemma3}, we also have $$\displaystyle d(W, \sigma^i W) = \frac{4}{3} d_0(W, \sigma^i W) > \frac{4}{3} \bigg[ \frac{1}{6} \bigg] =  \frac{2}{9}.$$
\end{proof}

In Proposition \ref{prop2} below we show the bound in the previous proposition is sharp.  We will do this by explicitly calculating $d (W, \sigma^{i_n} W)$, when $i_n =  |W_{n-1}| + 1 = 3^{n-1}$. We note that this sequence $i_n$ is important for Chac{\'o}n's transformation; it is an unpublished result of Friedman that along this sequence, Chac{\'o}n's transformation is $\frac{2}{3}$-partially rigid.  

\begin{proposition}
\label{prop2}
Let $n>0$ and $i_n = 2 |W_{n-1}| + 1$.  Then $d_0 (W, \sigma^{i_n} W) = \frac{1}{6} + \frac{1}{2 \cdot 3^n}$ and $d (W, \sigma^{i_n} W) = \frac{2}{9} + \frac{2}{3^{n+1}}$.
\end{proposition}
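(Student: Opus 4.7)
The plan is to apply the framework from the proof of Proposition \ref{prop1} at the specific shift $i = i_n = 2|W_{n-1}| + 1$, unpack the two resulting subwords $\beta_1, \beta_2$ explicitly, and verify that they produce the same $d_0$-distance from $W_n$, so that the weighted average from Proposition \ref{prop1}'s proof equals this common value for every $k$ (and hence so does its limit). Concretely, since $|W_n| - i_n = |W_{n-1}|$ and $i_n = 2|W_{n-1}|+1$, reading directly from $W_n = W_{n-1}W_{n-1}1W_{n-1}$ gives
$$\beta_1 = W_{n-1}W_{n-1}W_{n-1}1 \quad \text{and} \quad \beta_2 = W_{n-1}1W_{n-1}W_{n-1},$$
corresponding respectively to the cases where the relevant occurrence of $W_n$ in $W$ is followed by another $W_n$ or by $1W_n$.

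I would then compute $d_0(W_n, \beta_1)$ and $d_0(W_n, \beta_2)$ position by position. For $\beta_1$, the first $2|W_{n-1}|$ positions match $W_n$ identically, and the final $|W_{n-1}|+1$ positions reduce to a comparison of $1W_{n-1}$ (inside $W_n$) with $W_{n-1}1$ (inside $\beta_1$). For $\beta_2$, the first and the last blocks of length $|W_{n-1}|$ match, while the middle $|W_{n-1}|+1$ positions compare $W_{n-1}1$ with $1W_{n-1}$. In both cases Lemma \ref{lemma1} says the number of positions where $W_n$ shows $0$ and $\beta_j$ shows $1$ is
$$\left(\frac{1}{2} + \frac{1}{2\cdot 3^{n-1}}\right) \cdot 3^{n-1} = \frac{3^{n-1}+1}{2},$$
and dividing by the $3^n$ zeros of $W_n$ gives $d_0(W_n, \beta_1) = d_0(W_n, \beta_2) = \frac{1}{6} + \frac{1}{2 \cdot 3^n}$. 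Applying Lemma \ref{lemma3} finally yields $d(W, \sigma^{i_n}W) = \frac{4}{3}\bigl(\frac{1}{6} + \frac{1}{2 \cdot 3^n}\bigr) = \frac{2}{9} + \frac{2}{3^{n+1}}$.

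The main obstacle is the careful position-by-position bookkeeping in the two block comparisons — in particular, noting that the lone $1$ in the middle of $W_n$ sits opposite a $0$ in $\beta_1$ and so contributes nothing to $d_0(W_n, \beta_1)$ (since the $0$ is on the $\beta_1$ side) — and then recognizing that the two residual mismatched regions are precisely of the $1W_{n-1}$ versus $W_{n-1}1$ form to which Lemma \ref{lemma1} applies. Once this alignment is in hand, all further computation is arithmetic.
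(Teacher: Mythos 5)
Your proposal is correct and follows essentially the same route as the paper: it reuses the two-value decomposition $\beta_1 = W_{n-1}W_{n-1}W_{n-1}1$, $\beta_2 = W_{n-1}1W_{n-1}W_{n-1}$ from the proof of Proposition \ref{prop1}, reduces each comparison to Lemma \ref{lemma1} on the residual $W_{n-1}1$ versus $1W_{n-1}$ block, and finishes with Lemma \ref{lemma3}. The only difference is cosmetic: you count mismatches explicitly and divide by $3^n$, while the paper expresses the same computation as an average of three block $d_0$-values.
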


\begin{proof}
We want to calculate $$d_0 (W, \sigma^{i_n} W)= \lim_{k \rightarrow \infty} d_0(W_{n+k}, \alpha_{n+k}).$$  As in the previous lemma, we will calculate $d_0(W_{n+k}, \alpha_{n+k})$ as the average of $d_0(W_n, \beta_j)$, where $j$ ranges from 1 to $3^k$ and $\beta_j$ is the subword of $\alpha_{n+k}$ that the $j$th occurrence of $W_n$ in $W_{n+k}$ is being compared with.  As before, there are only two values of $\beta_j$, which are $\beta_1$ and $\beta_2$.  Since $i_n = 2 |W_{n-1}| + 1$, we can explicitly say that $\beta_1 = W_{n-1} W_{n-1} W_{n-1} 1$ and that $\beta_2 = W_{n-1} 1 W_{n-1} W_{n-1}$.  Now, by Lemma \ref{lemma1}, we have $$d_0(W_n, \beta_1) = \frac{1}{3} \bigg[ 0 + 0 + d_0(1W_{n-1}, W_{n-1}1)\bigg] = \frac{1}{3} \bigg[ \frac{1}{2} + \frac{1}{2 \cdot 3^{n-1}} \bigg] = \frac{1}{6} + \frac{1}{2 \cdot 3^n},$$ and $$d_0(W_n, \beta_2) = \frac{1}{3} \bigg[ 0 +  d_0(W_{n-1}1, 1W_{n-1}) + 0\bigg] = \frac{1}{3} \bigg[ \frac{1}{2} + \frac{1}{2 \cdot 3^{n-1}} \bigg] = \frac{1}{6} + \frac{1}{2 \cdot 3^n}.$$
Since $d_0(W_n, \beta_1) = d_0 (W_n, \beta_2) =  \frac{1}{6} + \frac{1}{2 \cdot 3^n}$, we have $$d_0 (W, \sigma^{i_n} W)= \lim_{k \rightarrow \infty} d_0(W_{n+k}, \alpha_{n+k}) = \frac{1}{6} + \frac{1}{2 \cdot 3^n}.$$  Also, by Lemma \ref{lemma3}, we have $$d (W, \sigma^i W) = \frac{4}{3} d_0 (W, \sigma^{i_n} W) = \frac{2}{9} + \frac{2}{3^{n+1}}.$$
\end{proof}

{\bf Acknowledgements}  
The research reported in this article was begun at the University of North Texas in the Spring of 2014.   At that time, each of the authors were affiliated with UNT:  Hunter Brumley and Nathanael McGlothlin were undergraduate students; Patrick Bell, Maireigh Nicholas, and Tofunmi Ogunfinmi were high school students at the Texas Academy of Mathematics and Science (housed at UNT); and Aaron Hill was a postdoctoral researcher.

\end{document}